\def\AA{{\mathbb A}}
\def\FF{{\mathbb F}}
\def\PP{{\mathbb P}}
\def\ZZ{{\mathbb Z}}
\def\0{{\mathbf 0}}
\def\1{{\mathbf 1}}
\def\a{{\mathbf a}}
\def\x{{\mathbf x}}
\def\Ocal{{\mathcal O}}
\def\Xcal{{\mathcal X}}
\def\pfrak{{\mathfrak p}}
\def\Kbar{{\bar K}}
\def\PGL{\mathrm{PGL}}
\def\GL{\mathrm{GL}}
\def\ord{\mathrm{ord}}
\def\Res{\mathrm{Res}}
\def\Aff{\mathrm{Aff}}
\def\rk{\mathrm{rank}}
\theoremstyle{plain}
\newtheorem{thm}{Theorem}
\newtheorem{cor}[thm]{Corollary}
\newtheorem{prop}[thm]{Proposition}
\newtheorem{lem}[thm]{Lemma}
\theoremstyle{definition}
\newtheorem*{dfn}{Definition}
\begin{document}

\title[Global Minimal Models]{Global minimal models for endomorphisms of projective space}

\author{Clayton Petsche and Brian Stout}

\address{Clayton Petsche; Department of Mathematics; Oregon State University; Corvallis OR 97331 U.S.A.}

\email{petschec@math.oregonstate.edu}

\address{Brian Stout; Ph.D. Program in Mathematics; CUNY Graduate Center; 365 Fifth Avenue; New York, NY 10016-4309 U.S.A.}

\email{bstout@gc.cuny.edu}

\thanks{Date of last revision: March 9 2013.}

\begin{abstract}
We prove the existence of global minimal models for rational morphisms $\phi:\PP^N\rightarrow\PP^N$ of projective space defined over the field of fractions of a principal ideal domain.
\end{abstract}

\maketitle


\section{Definitions and statement of the main results}\label{Introduction}

Let $R$ be a principal ideal domain (PID) with field of fractions $K$, and let $N$ be a positive integer.  In this paper, our primary objects of study are morphisms $\phi:\PP^N\to\PP^N$ defined over $K$.  Fixing a choice of homogeneous coordinates $\x=(x_0,\dots,x_N)$ on $\PP^N$, we may write $\phi$ explicitly as 
\begin{equation}\label{HomogMap}
\phi(x_0:\dots:x_N)=(\Phi_0(x_0,\dots,x_N):\dots:\Phi_N(x_0,\dots,x_N)),
\end{equation}
where $\Phi:\AA^{N+1}\to \AA^{N+1}$ is a map defined by an $(N+1)$-tuple $\Phi=(\Phi_0,\dots,\Phi_N)$ of forms of some common degree $d\geq1$ in the variables $x_0,x_1,\dots,x_N$, with the property that 
\begin{equation}\label{NonVanishing}
\Phi(\a)\neq\0\text{ whenever }\a\in\AA^{N+1}(\Kbar)\setminus\0,
\end{equation}
or equivalently that
\begin{equation}\label{ResultantNonVanishing}
\Res(\Phi)\neq0,
\end{equation}
where $\Res(\Phi)$ is the resultant of $\Phi$, a certain homogeneous integral polynomial in the coefficients of the forms $\Phi_n$; see Proposition~\ref{ResProp} for a review of the necessary facts about the resultant.  We refer to $d$ as the {\em algebraic degree} of $\phi$, and we refer to the map $\Phi$, which is uniquely determined by $\phi$ up to multiplication by a nonzero scalar in $K$, as a {\em homogeneous lift} for $\phi$.  

Conversely, starting with any map $\Phi:\AA^{N+1}\to \AA^{N+1}$ defined by an $(N+1)$-tuple $\Phi=(\Phi_0,\dots,\Phi_N)$ of forms of some common degree $d\geq0$, such that $\Phi$ satisfies the nonvanishing condition $(\ref{NonVanishing})$, the formula $(\ref{HomogMap})$ gives rise to a morphism $\phi:\PP^N\to\PP^N$ of algebraic degree $d$.  

In the study of the dynamical system obtained from iteration of the morphism $\phi$, it is generally true that the dynamical properties of $\phi$ are left unchanged when it is replaced with its conjugate $f\circ\phi\circ f^{-1}$ by an element $f$ of the automorphism group $\PGL_{N+1}(K)$ of $\PP^N$ over $K$.  Given a representative $A\in\GL_{N+1}(K)$ for $f$ under the quotient map $\GL_{N+1}\to\PGL_{N+1}$, and given a homgoeneous lift $\Phi:\AA^{N+1}\to \AA^{N+1}$ for $\phi$, observe that the map $\Psi=A\circ\Phi\circ A^{-1}:\AA^{N+1}\to \AA^{N+1}$ is a homogeneous lift for $\psi=f \circ\phi\circ f^{-1}$.  It is therefore natural to offer the following loosening of the notion of a homogeneous lift for $\phi$.

\begin{dfn}
Let $\phi:\PP^N\to\PP^N$ be a morphism defined over $K$.  A {\em model} for $\phi$ over $K$ is a map $\Psi:\AA^{N+1}\to \AA^{N+1}$ given by $\Psi=A\circ\Phi\circ A^{-1}$ for some homogeneous lift $\Phi:\AA^{N+1}\to \AA^{N+1}$ of $\phi$ and some linear automorphism $A\in\GL_{N+1}(K)$ of $\AA^{N+1}$.
\end{dfn}

While $\PGL_{N+1}(K)$-conjugation does not affect purely dynamical properties of morphisms, it does have subtle and unpredictable effects on integrality and divisibility properties in the ring $R$.    For each nonzero prime ideal $\pfrak$ of $R$, denote by $K_\pfrak$ the completion of $K$ with respect to the $\pfrak$-adic valuation, and let $R_\pfrak$ be the subring of $\pfrak$-integral elements of $K_\pfrak$.  Let $\FF_\pfrak=R_\pfrak/\pfrak R_\pfrak$ be the residue field at $\pfrak$, and denote by $x\mapsto\tilde{x}_\pfrak$ the surjective reduction map $R_\pfrak\to\FF_\pfrak$.

Given a model $\Psi:\AA^{N+1}\to \AA^{N+1}$ for a morphism $\phi:\PP^N\to\PP^N$ defined over $K_\pfrak$, we declare that $\Psi$ is {\em integral} (or {\em $\pfrak$-integral}) if each form $\Psi_n$ has coefficients in $R_\pfrak$.  If $\Psi$ is $\pfrak$-integral, then we may reduce the coefficients modulo $\pfrak$ and obtain a homogeneous map $\tilde{\Psi}_\pfrak:\AA^{N+1}\to\AA^{N+1}$ defined over the residue field $\FF_\pfrak$.

\begin{dfn}
A morphism $\phi:\PP^N\to\PP^N$ defined over $K_\pfrak$ has {\em good reduction} if $\phi$ has a $\pfrak$-integral model $\Psi:\AA^{N+1}\to \AA^{N+1}$ satisfying either (and therefore both) of the following two equivalent conditions:
\begin{itemize}
\item[{\bf (a)}]  the reduced map $\tilde{\Psi}_\pfrak:\AA^{N+1}\to\AA^{N+1}$ satisfies $\tilde{\Psi}_\pfrak(\a)\neq\0$ whenever $\a\in\AA^{N+1}(\overline{\FF}_\pfrak)\setminus\0$;
\item[{\bf (b)}]  $\Res(\Psi)\in R_\pfrak^\times$.
\end{itemize}
\end{dfn}

According to condition {\bf (a)}, this definition has the following fairly intuitive interpretation: a morphism $\phi:\PP^N\to\PP^N$ of algebraic degree $d\geq1$ defined over $K_\pfrak$ has good reduction precisely when it is $\PGL_{N+1}(K)$-conjugate to a morphism $\psi:\PP^N\to\PP^N$ for which reduction modulo $\pfrak$ gives rise to a morphism $\tilde{\psi}_{\pfrak}:\PP^N\to\PP^N$ of algebraic degree $d$ defined over the residue field $\FF_\pfrak$.  The equivalence of conditions {\bf (a)} and {\bf (b)} is a simple conseuquence of basic properties of the resultant, along with the fact that the unit group $R_\pfrak^\times$ is precisely the set of elements in $R_\pfrak$ whose image is nonzero under the reduction map $R_\pfrak\to\FF_\pfrak$.

If $\Psi:\AA^{N+1}\to \AA^{N+1}$ is an arbitrary $\pfrak$-integral model for $\phi$, then $\ord_\pfrak(\Res(\Psi))\geq0$ since $\Res(\Psi)$ is an integral polynomial in the coefficients of $\Psi$; good reduction at $\pfrak$ occurs precisely when a $\pfrak$-integral model $\Psi$ can be found with $\ord_\pfrak(\Res(\Psi))=0$.  Even in the case of bad reduction, however, one might still ask for a $\pfrak$-integral model $\Psi$ for $\phi$ with $\ord_\pfrak(\Res(\Psi))$ as small as possible.

\begin{dfn}
Let $\phi:\PP^N\to\PP^N$ be a morphism defined over $K_\pfrak$.  A $\pfrak$-integral model $\Psi:\AA^{N+1}\to \AA^{N+1}$ for $\phi$ is {\em minimal} (or {\em $\pfrak$-minimal}) if $\ord_\pfrak(\Res(\Psi))$ is minimal among all $\pfrak$-integral models $\Psi$ for $\phi$.
\end{dfn}

We can now state the main theorem of this paper.   Given a morphism $\phi:\PP^N\to\PP^N$ defined over $K$, and a nonzero prime ideal $\pfrak$ of $R$, there always exists a minimal $\pfrak$-integral model $\Psi$ for $\phi$: start with an arbitrary model defined over $K_\pfrak$, scale by a $\pfrak$-adic uniformizing parameter to obtain a $\pfrak$-integral model $\Psi$, and among all such $\Psi$, select one for which $\ord_\pfrak(\Res(\Psi))$ is minimal.  A priori these minimal $\pfrak$-integral models vary from prime to prime, but it is natural to ask whether one can find a {\em global minimal model}; that is, a model defined over $R$ which is simultaneously a minimal $\pfrak$-integral model at all prime ideals $\pfrak$ of $R$.  

\begin{thm}\label{GMMTheoremIntro}
Let $R$ be a PID with field of fractions $K$, and let $\phi:\PP^N\to\PP^N$ be a morphism defined over $K$.  Then $\phi$ has a model $\Psi:\AA^{N+1}\to \AA^{N+1}$, with coefficients in $R$, and which is $\pfrak$-minimal for all nonzero prime ideals $\pfrak$ of $R$.
\end{thm}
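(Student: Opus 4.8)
The plan is to reduce the global problem to a local one at each prime, and then to patch the local solutions together using the fact that $R$ is a PID. First, fix a homogeneous lift $\Phi : \AA^{N+1} \to \AA^{N+1}$ for $\phi$, scaled so that $\Phi$ has coefficients in $R$; this is possible since $K = \Frac(R)$ and we may clear denominators. For each nonzero prime $\pfrak$, let $m_\pfrak$ denote the minimal value of $\ord_\pfrak(\Res(\Psi))$ over all $\pfrak$-integral models $\Psi$ for $\phi$; by the remarks preceding the theorem, $m_\pfrak$ is well-defined and nonnegative, and $m_\pfrak = 0$ for all but finitely many $\pfrak$ (for instance, at every $\pfrak$ not dividing $\Res(\Phi)$, the lift $\Phi$ already has $\ord_\pfrak(\Res(\Phi)) = 0$, forcing $m_\pfrak = 0$). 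Thus the obstruction is concentrated at a finite set $S$ of primes.

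Next, at each $\pfrak \in S$, choose a $\pfrak$-minimal model $\Psi^{(\pfrak)} = A_\pfrak \circ \Phi \circ A_\pfrak^{-1}$ with $A_\pfrak \in \GL_{N+1}(K_\pfrak)$ and $\ord_\pfrak(\Res(\Psi^{(\pfrak)})) = m_\pfrak$. The key technical point is the transformation law for the resultant under conjugation: there is an explicit formula
\[
\Res(A \circ \Phi \circ A^{-1}) = (\det A)^{e} \, \Res(\Phi)
\]
for a universal exponent $e = e(N,d)$ depending only on $N$ and $d$ (this follows from the standard properties of the resultant recalled in Proposition~\ref{ResProp}, together with its behavior $\Res(A \circ \Phi) = (\det A)^{d^N} \Res(\Phi)$ and $\Res(\Phi \circ A^{-1}) = (\det A)^{-(N+1)d^{N-1}\cdot(\text{something})}\Res(\Phi)$; the precise exponent is not needed for the argument beyond the fact that it is a fixed integer). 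Consequently $m_\pfrak$ depends only on $\ord_\pfrak(\det A_\pfrak)$ through this linear relation, and moreover we may assume without loss of generality that $A_\pfrak \in \GL_{N+1}(K_\pfrak) \cap M_{N+1}(R_\pfrak)$ with entries in $R_\pfrak$, rescaling $A_\pfrak$ by a power of a uniformizer (which does not change the conjugate $A_\pfrak \circ \Phi \circ A_\pfrak^{-1}$ at all).

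Now I would use strong approximation / the Chinese Remainder Theorem in the following form: since $R$ is a PID and $S$ is finite, for each $\pfrak \in S$ the matrix $A_\pfrak$ can be approximated arbitrarily well $\pfrak$-adically by a matrix over $R$, and by CRT one can find a single matrix $A \in M_{N+1}(R)$ with $\det A \neq 0$ such that $A \equiv A_\pfrak \pmod{\pfrak^{k_\pfrak}}$ for each $\pfrak \in S$, where $k_\pfrak$ is chosen large enough that the $\pfrak$-adic valuations of all the relevant quantities — the entries of $A^{-1}$ (equivalently, $\det A$ together with the adjugate) and of $A \circ \Phi \circ A^{-1}$ — agree with those for $A_\pfrak$. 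Here the PID hypothesis is essential: it guarantees that $\det A$ and the ideal generated by the entries of $A$ are principal, so we can arrange $A$ to have the same elementary divisors $\pfrak$-adically as $A_\pfrak$ at each $\pfrak \in S$, and we can factor out the content. Set $\Psi = A \circ \Phi \circ A^{-1}$, then clear the common denominator: replace $A$ by $cA$ for a suitable $c \in R$, or equivalently multiply $\Psi$ by the appropriate element of $R$, to obtain a model with coefficients in $R$. One then checks: (i) at each $\pfrak \in S$, the model $\Psi$ agrees with $\Psi^{(\pfrak)}$ closely enough $\pfrak$-adically to be $\pfrak$-integral with $\ord_\pfrak(\Res(\Psi)) = m_\pfrak$, hence $\pfrak$-minimal; (ii) at each $\pfrak \notin S$, the model $\Psi$ is $\pfrak$-integral (its coefficients lie in $R$) and $\ord_\pfrak(\Res(\Psi)) = 0 = m_\pfrak$ by the resultant transformation law, using that $A$ may be taken a $\pfrak$-adic unit matrix away from $S$. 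Hence $\Psi$ is the desired global minimal model.

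The main obstacle, and the place where care is needed, is step (i)–(ii): controlling simultaneously, at the finitely many bad primes, both the integrality of $\Psi = A \circ \Phi \circ A^{-1}$ and the exact $\pfrak$-adic valuation of its resultant, while ensuring nothing goes wrong at the good primes when we clear denominators globally. The subtlety is that conjugation involves $A^{-1}$, whose entries pick up $\det A$ in the denominator, so approximating $A_\pfrak$ by $A$ is not by itself enough — one must approximate in such a way that the \emph{Smith normal form} of $A$ matches that of $A_\pfrak$ $\pfrak$-adically (not merely the matrix entries to high precision), which is exactly where the PID structure and the classification of modules over a PID enter. Once the local models are realized by integral matrices with the correct elementary divisors and then globalized via CRT, verifying minimality is a direct consequence of the resultant formula and the definition of $m_\pfrak$.
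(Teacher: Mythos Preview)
Your outline has the right architecture --- pick local minimal models $\Psi^{(\pfrak)}=A_\pfrak\circ\Phi\circ A_\pfrak^{-1}$ at the finitely many bad primes, then globalize the conjugating matrix --- but the globalization step, as written, has a real gap.

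The problem is point (ii). You assert that ``$A$ may be taken a $\pfrak$-adic unit matrix away from $S$,'' but CRT gives you no such control. If you choose $A\in M_{N+1}(R)$ by the Chinese Remainder Theorem so that $A\equiv A_\pfrak\pmod{\pfrak^{k_\pfrak}}$ for $\pfrak\in S$, then for large enough $k_\pfrak$ you do get $A A_\pfrak^{-1}\in\GL_{N+1}(R_\pfrak)$ at those primes, and hence $\pfrak$-minimality there. But nothing prevents $\det A$ from being divisible by primes $\pfrak\notin S$. At such a prime $A^{-1}$ has non-integral entries, $\Psi=A\circ\Phi\circ A^{-1}$ need not be $\pfrak$-integral, and the resultant formula gives $\ord_\pfrak(\Res(\Psi))=C(N,d)\cdot\ord_\pfrak(\det A)$, which is generally nonzero. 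Your proposed fix --- ``clear the common denominator'' by multiplying $\Psi$ by $c\in R$ --- destroys minimality, since $\Res(c\Psi)=c^{(N+1)d^N}\Res(\Psi)$ picks up every prime dividing $c$. Matching elementary divisors via Smith normal form at the primes in $S$ does not by itself resolve this; you still have to produce a single global matrix that is a $\pfrak$-unit at \emph{all} of the infinitely many $\pfrak\notin S$.

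What is actually needed is precisely the statement that, given $(A_\pfrak)$ with $A_\pfrak=I$ for almost all $\pfrak$, there exists $A\in\GL_{N+1}(K)$ with $A_\pfrak A^{-1}\in\GL_{N+1}(R_\pfrak)$ for \emph{every} $\pfrak$. This is the factorization $\GL_{N+1}(\AA_R)=\GL_{N+1}^0(\AA_R)\,\GL_{N+1}(K)$ (Proposition~\ref{MainGroupThm}), which the paper proves via the local--global correspondence for lattices over a PID: the local lattices $A_\pfrak^{-1}R_\pfrak^{N+1}$ assemble into a global $R$-lattice $X\subseteq K^{N+1}$, and any $R$-basis of $X$ furnishes the required $A$. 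Once you have that $A$, setting $\Psi=A\circ\Phi\circ A^{-1}$ immediately gives a model with coefficients in $R_\pfrak$ for every $\pfrak$ (hence in $R$), and $\ord_\pfrak(\Res(\Psi))=\ord_\pfrak(\Res(\Phi_\pfrak))$ for every $\pfrak$ --- no denominator-clearing, no separate argument at good versus bad primes. Your sketch is missing exactly this lattice/class-number-one input, and the places where you invoke ``CRT'' and ``Smith normal form'' are gesturing at, but not supplying, it.
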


An interesting special case of Theorem~\ref{GMMTheoremIntro} occurs when the morphism $\phi:\PP^N\to\PP^N$ is assumed to have {\em everywhere} good reduction; that is, when $\phi$ has good reduction at all nonzero prime ideals $\pfrak$ of $R$.  While this represents an extremal case of Theorem~\ref{GMMTheoremIntro}, it is perhaps not as special as it may appear: since any morphism $\phi:\PP^N\to\PP^N$ defined over $K$ has good reduction at all except a finite set $S$ of nonzero prime ideals $\pfrak$ of $R$, replacing $R$ with the larger PID $R_S=\{r\in K\mid \ord_\pfrak(r)\geq0\text{ for all  }\pfrak\not\in S\}$, we observe that $\phi$ has everywhere good reduction over $R_S$.

\begin{cor}\label{EGRTheoremIntro}
Let $R$ be a PID with field of fractions $K$, let $\phi:\PP^N\to\PP^N$ be a morphism defined over $K$, and assume that $\phi$ has good reduction at all nonzero prime ideals $\pfrak$ of $R$.  Then $\phi$ has a model $\Psi:\AA^{N+1}\to \AA^{N+1}$, with coefficients in $R$, such that $\Res(\Psi)\in R^\times$.
\end{cor}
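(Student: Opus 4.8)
The plan is to deduce Corollary~\ref{EGRTheoremIntro} directly from Theorem~\ref{GMMTheoremIntro}, together with the characterization of good reduction by condition {\bf (b)}.

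First I would apply Theorem~\ref{GMMTheoremIntro} to obtain a model $\Psi:\AA^{N+1}\to\AA^{N+1}$ for $\phi$ with coefficients in $R$ which is $\pfrak$-minimal at every nonzero prime ideal $\pfrak$ of $R$. Since $\Psi$ is a model for $\phi$, it has the form $\Psi=A\circ\Phi\circ A^{-1}$ for a homogeneous lift $\Phi$ of $\phi$ and some $A\in\GL_{N+1}(K)$; by the nonvanishing property of $\Phi$ (equivalently $\Res(\Phi)\neq0$) and the behavior of the resultant under linear conjugation recorded in Proposition~\ref{ResProp}, we get $\Res(\Psi)\neq0$. Moreover, because every coefficient of $\Psi$ lies in $R$ and $\Res(\Psi)$ is an integral polynomial in those coefficients, $\Res(\Psi)$ is a nonzero element of $R$.

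Next I would pin down $\ord_\pfrak(\Res(\Psi))$ at each prime. Fix a nonzero prime ideal $\pfrak$ of $R$. Because $\phi$ has good reduction at $\pfrak$, by definition there is a $\pfrak$-integral model $\Psi'$ for $\phi$ with $\Res(\Psi')\in R_\pfrak^\times$, i.e.\ $\ord_\pfrak(\Res(\Psi'))=0$. On the other hand, every $\pfrak$-integral model satisfies $\ord_\pfrak(\Res)\geq0$, so the minimal value of $\ord_\pfrak(\Res)$ over all $\pfrak$-integral models of $\phi$ is exactly $0$. Since $\Psi$ is $\pfrak$-minimal, it realizes this minimum, and hence $\ord_\pfrak(\Res(\Psi))=0$.

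Finally, $\Res(\Psi)$ is a nonzero element of the PID (hence UFD) $R$ with $\ord_\pfrak(\Res(\Psi))=0$ at every nonzero prime $\pfrak$; such an element is a unit, so $\Res(\Psi)\in R^\times$, as claimed. There is no serious obstacle here, since essentially all the content is packaged into Theorem~\ref{GMMTheoremIntro}; the only point needing care is the observation that good reduction at $\pfrak$ forces the minimal resultant order to be exactly $0$, which is immediate from condition {\bf (b)} together with the nonnegativity of $\ord_\pfrak(\Res)$ on $\pfrak$-integral models.
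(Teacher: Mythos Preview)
Your proposal is correct and follows essentially the same approach as the paper: apply Theorem~\ref{GMMTheoremIntro} to get a global minimal model $\Psi$, use the good reduction hypothesis to see that the minimal value of $\ord_\pfrak(\Res)$ is $0$ at every $\pfrak$, and conclude via $(\ref{IntegralLocalGlobal})$ that $\Res(\Psi)\in R^\times$. Your write-up simply spells out in more detail what the paper's two-line proof leaves implicit.
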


In the case $N=1$, Theorem~\ref{GMMTheoremIntro} was proposed by Silverman (\cite{MR2316407} pp. 236-237) and proved by Bruin-Molnar \cite{BruinMolnar}; thus our result generalizes this to arbitrary dimension $N\geq1$.  Our proof is not a straightforward generalization the proof by Bruin-Molnar, however.  In \cite{BruinMolnar}, it is shown that, in order to produce a global minimal model for a rational map $\phi:\PP^1\to\PP^1$, one only needs to consider conjugates $f\circ\phi\circ f^{-1}$ of $\phi$ by $f$ in the group $\Aff_2$ of automorphisms leaving $\infty$ fixed; i.e. automorphisms taking the form $f(x)=\alpha x+\beta$ in an affine coordinate $x$.  We do not know whether, in the higher dimensional case, a generalization of $\Aff_2$ can be used in a similar fashion leading to a proof of Theorem~\ref{GMMTheoremIntro}.

Our proof of Theorem~\ref{GMMTheoremIntro} relies on the theory of lattices over a PID, and in particular on the action of the adelic general linear group $\GL_{n}(\AA_R)$ on the space of all such lattices of rank $n$.  The main technical lemma of this paper is a factorization of the group $\GL_{n}(\AA_R)$ as the product of the subgroup $\GL_{n}(K)$ of principal adeles with the direct product $\GL_{n}^0(\AA_R)=\prod_\pfrak\GL_n(R_\pfrak)$.  When $R$ is a ring of $S$-integers in a number field $K$, this follows from a more general result of Borel \cite{MR0202718} on the finiteness of the class number of $\GL_n$.  Since we have not been able to find the required material worked out over an arbitrary PID, in this paper we give a self-contained treatment.  

Theorem~\ref{GMMTheoremIntro} and Corollary~\ref{EGRTheoremIntro} may find arithmetic applications in the setting of a global field $K$ (a number field or a function field with a finite constant field) and a finite subset $S$ of places of $K$.  After possibly replacing $S$ with a suitable larger finite set of places, it is always possible to obtain the situation in which the ring $\Ocal_{S}$ of $S$-integers is a PID.  In \cite{Stout}, the first author uses Theorem~\ref{GMMTheoremIntro} to prove a finiteness theorem for twists of rational maps having prescribed good reduction.  Other applications of this idea, in slightly different contexts, can be found in the proof of Shafarevich's Theorem for elliptic curves (see \cite{MR2514094} $\S$IX.6), as well as an analogue for rational maps due to Petsche \cite{Petsche}.  

The first author's research was supported in part by grant DMS-0901147 of the National Science Foundation.  The second author is supported by grant DMS-0739346 of the National Science Foundation.


\section{Global and local lattices over a PID}\label{LatticeSect}

Throughout this paper $R$ is a PID with field of fractions $K$, and $R^\times$ denotes the group of units in $R$.  The set of non-zero prime (and thus maximal) ideals of $R$ will be denoted by $M_R$.  For each $\pfrak\in M_R$, let $K_\pfrak$ be the completion of $K$ with respect to the discrete valuation $\ord_\pfrak(\cdot)$ on $K$, and let 
\begin{equation*}
\begin{split}
R_\pfrak & = \{a\in K_\pfrak\mid \ord_\pfrak(a)\geq0\} \\
R_\pfrak^\times & = \{a\in K_\pfrak\mid \ord_\pfrak(a)=0\} 
\end{split}
\end{equation*}
be the subring of $\pfrak$-integral elements of $K_\pfrak$, and its unit group, respectively.  It is a standard exercise to check the identities 
\begin{equation}\label{IntegralLocalGlobal}
\begin{split}
R & =\{a\in K\mid \ord_\pfrak(a)\geq0\text{ for all } \pfrak\in M_R\} \\
R^\times & =\{a\in K\mid \ord_\pfrak(a)=0\text{ for all } \pfrak\in M_R\}.
\end{split}
\end{equation}

\begin{prop}\label{LatticeProp}
Let $X$ be an $R$-submodule of $K^n$.  Then the following three conditions are equivalent:
\begin{enumerate}
\item[(i)] $X$ is free and $\text{rank}(X)=n$.
\item[(ii)] $a R^n\subseteq X \subseteq b R^n$ for some $a,b\in K^\times$.
\item[(iii)] $X=AR^n$ for some $A\in \GL_{n}(K)$.
\end{enumerate}
\end{prop}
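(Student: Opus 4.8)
The plan is to prove the cycle of implications $(iii)\Rightarrow(ii)\Rightarrow(i)\Rightarrow(iii)$, which is the most economical route since $(iii)\Rightarrow(ii)$ and $(ii)\Rightarrow(i)$ are essentially soft, and the only real content is $(i)\Rightarrow(iii)$, which I expect to be the main obstacle.

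For $(iii)\Rightarrow(ii)$: write $A\in\GL_n(K)$ and $A^{-1}\in\GL_n(K)$. Since each has finitely many entries in $K^\times$, one can choose $b\in K^\times$ clearing denominators of $A$ (so that $b^{-1}A$ has entries in $R$, whence $AR^n\subseteq bR^n$) and, symmetrically, $a\in K^\times$ so that $a^{-1}A^{-1}$ has entries in $R$, which gives $a^{-1}R^n\subseteq A^{-1}\cdot AR^n$, i.e. $aR^n\subseteq AR^n=X$. For $(ii)\Rightarrow(i)$: after rescaling by $a^{-1}$ (an $R$-module isomorphism $K^n\to K^n$) we may assume $R^n\subseteq X\subseteq cR^n$ for some $c\in K^\times$, so $X$ is sandwiched between two free modules of rank $n$; then $X$ is a submodule of the finitely generated free $R$-module $cR^n$ over the PID $R$, hence finitely generated and free by the structure theorem for modules over a PID, and its rank is $n$ because it contains the rank-$n$ submodule $R^n$ and sits inside the rank-$n$ module $cR^n$ (rank is monotone and additive along short exact sequences).

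For $(i)\Rightarrow(iii)$: suppose $X$ is free of rank $n$ with $R$-basis $v_1,\dots,v_n\in K^n$. I would first check that $v_1,\dots,v_n$ are $K$-linearly independent: a $K$-linear relation $\sum \lambda_i v_i=0$ with $\lambda_i\in K$ can be cleared of denominators to give an $R$-linear relation $\sum r_i v_i=0$ with $r_i\in R$ not all zero, contradicting $R$-freeness. Since $\dim_K K^n=n$, the $v_i$ then form a $K$-basis of $K^n$, so the matrix $A$ whose columns are $v_1,\dots,v_n$ lies in $\GL_n(K)$. Finally $X=Rv_1+\dots+Rv_n=A R^n$ by definition of matrix-vector multiplication. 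This is the step carrying the argument, but over a PID the obstruction is mild: the only subtlety is passing from an abstract $R$-basis to $K$-linear independence, which the denominator-clearing trick handles, and one must be slightly careful that $\mathrm{rank}(X)=n$ in hypothesis $(i)$ is exactly what forces $A$ to be square and invertible rather than merely injective on $R^n$.
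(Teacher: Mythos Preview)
Your proof is correct and follows essentially the same cycle $(i)\Rightarrow(iii)\Rightarrow(ii)\Rightarrow(i)$ with the same arguments as the paper: form the matrix of an $R$-basis and clear denominators to show invertibility, clear denominators in $A$ and $A^{-1}$ to get the sandwich, and invoke the structure theorem over a PID for the last step. One small slip: in $(iii)\Rightarrow(ii)$ your choice of $a$ with $a^{-1}A^{-1}\in M_n(R)$ actually yields $a^{-1}R^n\subseteq AR^n$, not $aR^n\subseteq AR^n$ (the intermediate inclusion ``$a^{-1}R^n\subseteq A^{-1}\cdot AR^n$'' is just $a^{-1}R^n\subseteq R^n$ and does not give what you want); simply relabel $a\leftrightarrow a^{-1}$, or, as the paper does, take $a$ so that $aA^{-1}$ has $R$-entries.
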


\begin{proof}
(i) $\Rightarrow$ (iii): If (i) holds, let $A$ be an $n\times n$ matrix over $K$ whose columns form an $R$-basis for $X$. Then $X=AR^n$ and $A$ is nonsingular, hence $A\in\GL_{n}(K)$.  (If $A$ were singular, then there would be a non-trivial $K$-linear dependence among the columns of $A$; multiplying by the product of the denominators of the coefficients of this linear dependence, we would obtain a linear dependence with coefficients in $R$, in violation of the assumption that the columns of $A$ form an $R$-basis for $X$.)

(iii) $\Rightarrow$ (ii): If (iii) holds, let $A\in\GL_{n}(K)$ such that $X=AR^n$. Let $a_{ij}$ denote the entries of $A$ and let $b$ be the reciprocal of the product of the denominators of  the $a_{ij}$ for $1\leq i,j\leq n$.  Then $b^{-1}X=b^{-1}AR^n\subseteq R^n$ since $b^{-1}A$ has entries in $R$, and therefore $X\subseteq bR^n$.  Let $b_{ij}$ denote the entries of $A^{-1}$ and let $a$ be the product of the denominators of the $b_{ij}$ for $1\leq i,j \leq n$. Then $aR^n\subseteq aA^{-1}X\subseteq X$ since $aA^{-1}$ has entries in $R$ and $X$ is an $R$-module.

(ii) $\Rightarrow$ (i): Since $x\mapsto a x$ is an isomorphism $R^n\to a R^n$, we see that $a R^n$ is a free $R$-module of rank $n$; the same is true of $b R^n$.  Since $R$ is a PID, it follows from Theorem 7.1 of \cite{MR1878556} that any $R$-submodule of $b R^n$ is also free of rank less than or equal to $n$.  Since $X\subseteq b R^n$, $X$ is free and $\rk(X)\leq\rk(b R^n)$. The inequality $\rk(a  R^n)\leq\rk(X)$ now follows from the same theorem, as $X$ has been shown to be free.  Because $a R^n$ and $b  R^n$ are both of rank $n$, it follows that $X$ has rank $n$.
\end{proof}

\begin{dfn}
An $R$-\emph{lattice} in $K^n$ is a free $R$-submodule of $K^n$ of rank $n$.  
\end{dfn}

For each $\pfrak\in M_R$, the local ring $R_\pfrak$ is itself a PID, and thus Proposition~\ref{LatticeProp} applies to $R_\pfrak$-submodules of $K_\pfrak^n$.  In particular, an $R_\pfrak$-lattice in $K_\pfrak^n$ is a free $R_\pfrak$-submodule of $K_\pfrak^n$ of rank $n$.  

If $X$ is an $R$-lattice in $K^n$ and $\pfrak\in M_R$ is a nonzero prime ideal of $R$, there is a natural way to associate to $X$ an $R_\pfrak$-lattice $X_\pfrak$ in $K^n_\pfrak$.  By Proposition 3, we may find some $A\in\GL_{n}(K)$ such that $X=AR^n$, and we define $X_\pfrak=AR^n_\pfrak$.  This definition does not depend on the choice of matrix $A$.  For if $X=BR^n$, then $A^{-1}B$ is an isomorphism $R^n\rightarrow R^n$, and therefore $A^{-1}B\in\GL_{n}(R)\subseteq \GL_{n}(R_\pfrak)$.  Then $A^{-1}BR^n_\pfrak=R^n_\pfrak$ and therefore $BR^n_\pfrak=AR^n_\pfrak$.  The definition of $X_\pfrak$ is equivalent to the $R_\pfrak$-module $X\otimes_R R_\pfrak$ obtained by extension of scalars.

\begin{lem}
Let $X$ be an $R$-lattice in $K^n$.  Then for every $\pfrak\in M_R$, $X_\pfrak$ is an $R_\pfrak$-lattice in $K_\pfrak^n$, and for almost every $\pfrak\in M_R$, $X_\pfrak=R_\pfrak^n$. 
\end{lem}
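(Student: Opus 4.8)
The plan is to deduce both assertions from Proposition~\ref{LatticeProp}, applied with the PID $R$ replaced by the PID $R_\pfrak$, supplemented by an elementary finiteness observation about denominators.

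First I would fix $A\in\GL_n(K)$ with $X=AR^n$, so that $X_\pfrak=AR_\pfrak^n$ by definition. Since $\GL_n(K)\subseteq\GL_n(K_\pfrak)$, the $R_\pfrak$-submodule $X_\pfrak$ of $K_\pfrak^n$ satisfies condition (iii) of Proposition~\ref{LatticeProp} over the PID $R_\pfrak$, hence also condition (i): it is free of rank $n$ over $R_\pfrak$, that is, an $R_\pfrak$-lattice in $K_\pfrak^n$. That settles the first claim.

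For the second claim I would first observe that $X_\pfrak=R_\pfrak^n$ if and only if $A\in\GL_n(R_\pfrak)$: testing on the standard basis vectors, $AR_\pfrak^n\subseteq R_\pfrak^n$ holds exactly when every entry of $A$ lies in $R_\pfrak$, and applying the same to $A^{-1}$ together with the reverse inclusion shows that $AR_\pfrak^n=R_\pfrak^n$ holds exactly when both $A$ and $A^{-1}$ have all their entries in $R_\pfrak$, which is precisely the condition $A\in\GL_n(R_\pfrak)$. It then remains to show that $A\in\GL_n(R_\pfrak)$ for all but finitely many $\pfrak\in M_R$. The entries of $A$ and of $A^{-1}$ form a finite list of elements of $K$; a zero entry lies in $R_\pfrak$ for every $\pfrak$, and for each nonzero entry, written as $c=r/s$ with $r,s\in R\setminus\{0\}$, unique factorization in the PID $R$ shows that only finitely many $\pfrak\in M_R$ divide $rs$, so that $\ord_\pfrak(c)=0$ for all other $\pfrak$. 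Taking $S$ to be the (finite) union of these finitely many exceptional sets, for every $\pfrak\in M_R\setminus S$ all entries of $A$ and of $A^{-1}$ lie in $R_\pfrak$, whence $A\in\GL_n(R_\pfrak)$ and $X_\pfrak=AR_\pfrak^n=R_\pfrak^n$.

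The argument is routine throughout; the only point requiring any care is the finiteness statement in the last step, which I expect to be the main (if minor) obstacle. It rests on the fact that a nonzero element of $K$ has nonzero $\pfrak$-adic valuation for only finitely many $\pfrak\in M_R$, a direct consequence of unique factorization in $R$ (equivalently of the identities~\eqref{IntegralLocalGlobal} applied to the numerator and denominator of the element).
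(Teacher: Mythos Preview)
Your proof is correct and follows essentially the same approach as the paper: write $X=AR^n$ with $A\in\GL_n(K)$, invoke Proposition~\ref{LatticeProp} over $R_\pfrak$ for the first claim, and observe that $X_\pfrak=R_\pfrak^n$ exactly when $A\in\GL_n(R_\pfrak)$, which fails at only finitely many primes. The only cosmetic difference is in the description of the exceptional set: the paper names the primes dividing the denominators of the entries of $A$ or the numerator of $\det(A)$, whereas you (equivalently, and perhaps more symmetrically) look at the primes where some entry of $A$ or $A^{-1}$ has nonzero valuation.
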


\begin{proof}
Let $X$=$AR^n$ for $A\in\GL_{n}(K)$. For any $\pfrak\in M_R$, we have that $X_\pfrak=AR^n_\pfrak$ and therefore $X_\pfrak$ is an $R_\pfrak$-lattice in $K_\pfrak^n$ by Proposition~\ref{LatticeProp}.  Furthermore, $X_\pfrak=R^n_\pfrak$ for all $\pfrak\in M_R$ except for the finitely many $\pfrak$ for which $A\not\in\GL_{n}(R_\pfrak)$.  These primes correspond to the irreducible elements which occur in the denominators of the entries of $A$ or in the numerator of the determinant of $A$.
\end{proof}

\begin{lem}
Conversely, suppose that $(X_\pfrak)$ is a collection of $R_\pfrak$-lattices in $K_\pfrak^n$ for each $\pfrak\in M_R$, such that $X_\pfrak=R_\pfrak^n$ for almost every $\pfrak$.  Then 
\begin{equation*}
X'=\lbrace x\in K^n | x\in X_\pfrak\text{ for all }\pfrak\rbrace
\end{equation*}
is an $R$-lattice in $K^n$, and $X'_\pfrak=X_\pfrak$ for each prime $\pfrak\in M_R$.
\end{lem}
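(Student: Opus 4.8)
I want to show that $X' = \{x \in K^n \mid x \in X_\pfrak \text{ for all } \pfrak\}$ is an $R$-lattice and that its localizations recover the given data $(X_\pfrak)$. The strategy is to sandwich $X'$ between two scalar multiples of $R^n$ and apply Proposition~\ref{LatticeProp}(ii), then check the localization claim prime by prime.

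First I would produce a single global matrix that works almost everywhere. Let $S \subseteq M_R$ be the finite set of primes where $X_\pfrak \neq R_\pfrak^n$. For each $\pfrak \in S$, choose $A_\pfrak \in \GL_n(K_\pfrak)$ with $X_\pfrak = A_\pfrak R_\pfrak^n$, and let $c_\pfrak, d_\pfrak \in K_\pfrak^\times$ be scalars as in the proof of Proposition~\ref{LatticeProp}(iii)$\Rightarrow$(ii) so that $c_\pfrak R_\pfrak^n \subseteq X_\pfrak \subseteq d_\pfrak R_\pfrak^n$; after replacing $c_\pfrak, d_\pfrak$ by powers of a uniformizer I may assume $c_\pfrak = \pi_\pfrak^{m_\pfrak}$, $d_\pfrak = \pi_\pfrak^{-m_\pfrak}$ for some integer $m_\pfrak \geq 0$, where $\pi_\pfrak \in R$ is a generator of $\pfrak$. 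Now set $a = \prod_{\pfrak \in S} \pi_\pfrak^{m_\pfrak} \in R$ and $b = a^{-1} \in K^\times$. For $\pfrak \in S$ we have $a R_\pfrak^n \subseteq \pi_\pfrak^{m_\pfrak} R_\pfrak^n \subseteq X_\pfrak$ (since the other factors of $a$ are $\pfrak$-units) and similarly $X_\pfrak \subseteq b R_\pfrak^n$; for $\pfrak \notin S$ we have $a R_\pfrak^n \subseteq R_\pfrak^n = X_\pfrak \subseteq R_\pfrak^n \subseteq b R_\pfrak^n$ (using $m_\pfrak \geq 0$). Thus $a R_\pfrak^n \subseteq X_\pfrak \subseteq b R_\pfrak^n$ for \emph{every} $\pfrak \in M_R$.

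Next I intersect down to $K^n$. Since $a R^n = \{x \in K^n \mid x \in a R_\pfrak^n \text{ for all } \pfrak\}$ — this follows from the local-global description (\ref{IntegralLocalGlobal}) applied coordinatewise to $a^{-1}x$ — the inclusion $a R_\pfrak^n \subseteq X_\pfrak$ for all $\pfrak$ gives $a R^n \subseteq X'$. Likewise $b R^n = \{x \in K^n \mid x \in b R_\pfrak^n \text{ for all }\pfrak\}$, and $X_\pfrak \subseteq b R_\pfrak^n$ for all $\pfrak$ forces $X' \subseteq b R^n$. Moreover $X'$ is visibly an $R$-submodule of $K^n$, since each $X_\pfrak$ is an $R_\pfrak$-module and hence an $R$-module, and an intersection of submodules is a submodule. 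So $X'$ satisfies condition (ii) of Proposition~\ref{LatticeProp}, and is therefore an $R$-lattice.

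Finally, the localization identity $X'_\pfrak = X_\pfrak$. Write $X' = A R^n$ with $A \in \GL_n(K)$, so by definition $X'_\pfrak = A R_\pfrak^n$. Fix a prime $\pfrak$. The inclusion $X' \subseteq X_\pfrak$ gives $A R_\pfrak^n = X'_\pfrak \subseteq X_\pfrak$ (localizing an inclusion of $R$-modules, using that $X_\pfrak$ is already an $R_\pfrak$-module so it absorbs $R_\pfrak$-scalars). For the reverse inclusion, let $y \in X_\pfrak$; I want to approximate $y$ by an element of $X'$. Choose a large integer $k$ so that $\pi_\pfrak^{-k} X_\pfrak \supseteq R_\pfrak^n$ and, simultaneously, so that all the finitely many primes in $S \cup \{\pfrak\}$ are accounted for; by a strong-approximation / Chinese-remainder argument (working with the finitely many relevant primes and using that $R$ is dense in $\prod_{\pfrak' \in S \cup \{\pfrak\}} K_{\pfrak'}$) I can find $x \in K^n$ with $x$ close to $y$ $\pfrak$-adically, $x \in X_{\pfrak'}$ for every $\pfrak' \in S$, and $x \in R_{\pfrak'}^n$ for the remaining $\pfrak' \notin S \cup \{\pfrak\}$ — i.e. $x \in X'$ — while $x - y \in \pi_\pfrak^{K} X_\pfrak \subseteq X'_\pfrak$ for $K$ large, so that $y = x - (x-y) \in X'_\pfrak + X'_\pfrak = X'_\pfrak$. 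Hence $X_\pfrak \subseteq X'_\pfrak$, completing the proof.

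The main obstacle is the reverse inclusion $X_\pfrak \subseteq X'_\pfrak$ in the last step: the global lattice $X'$ is defined by an intersection, and one must genuinely produce global elements of $K^n$ with prescribed local behavior at every prime in $S$ and prescribed $\pfrak$-adic approximation, which is precisely where an approximation theorem (equivalently, the fact that $R \to \prod_{\text{finite}} R/\pfrak^{e}$ is surjective, i.e. CRT in $R$) has to be invoked — the rest is bookkeeping with Proposition~\ref{LatticeProp} and the local-global identities (\ref{IntegralLocalGlobal}).
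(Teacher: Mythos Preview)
Your argument is correct, and the first half (sandwiching $X'$ between $aR^n$ and $bR^n$ via products of local uniformizer powers, then invoking Proposition~\ref{LatticeProp}(ii)) is essentially identical to the paper's proof.

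For the reverse inclusion $X_\pfrak\subseteq X'_\pfrak$, however, you take a genuinely different route. The paper argues by contradiction using the adele space: it forms $Y=\prod_\pfrak X_\pfrak$ and $Y'=\prod_\pfrak X'_\pfrak$ inside $\AA_R^n$, observes that a strict inclusion $X'_{\pfrak_0}\subsetneq X_{\pfrak_0}$ would make $Y\setminus Y'$ a nonempty open set, and then invokes density of $K^n$ in $\AA_R^n$ (weak approximation) to locate a principal adele in $Y\setminus Y'$, which immediately contradicts the definition of $X'$. You instead work directly and constructively: given $y\in X_\pfrak$, you use CRT in the PID $R$ to manufacture $x\in X'$ with $x-y\in\pi_\pfrak^K X_\pfrak\subseteq X'_\pfrak$. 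Both arguments rest on the same underlying approximation principle; the paper's is slicker and introduces the adelic framework that is reused in $\S$\ref{FactorizationSect}, while yours is more elementary and avoids the restricted-product topology entirely.

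One small imprecision: you write that ``$R$ is dense in $\prod_{\pfrak'\in S\cup\{\pfrak\}} K_{\pfrak'}$,'' but $R$ only hits the integral part $\prod R_{\pfrak'}$. What you actually need (and what your CRT remark correctly points to) is the surjectivity $R\twoheadrightarrow\prod_{\pfrak'\in S\cup\{\pfrak\}} R/\pfrak'^{e_{\pfrak'}}$, combined with a global scaling by a power of $a$ to absorb the denominators of $y$ and the membership conditions $x\in X_{\pfrak'}$ at primes in $S$. With that adjustment the argument goes through as you sketched.
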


\begin{proof}
$X'$ is plainly an $R$-submodule of $K^n$ because $R\subseteq R_\pfrak$ for all $\pfrak\in M_R$ and each $X_\pfrak$ is an $R_\pfrak$-submodule of $K_\pfrak^n$.  By Proposition 3, to show that $X'$ is free of rank $n$ it is sufficient to show that $aR^n\subseteq X'\subseteq bR^n$ for some $a,b\in K^\times$.  As each $X_\pfrak$ is an $R_\pfrak$-lattice in $K^n_\pfrak$, we know by Proposition 3 that a similar chain of inclusions $a_\pfrak R^n_\pfrak\subseteq X_\pfrak\subseteq b_\pfrak R^n_\pfrak$ holds for each each prime $\pfrak$ where $a_\pfrak,b_\pfrak\in K^\times_\pfrak$. By the assumption $X_\pfrak=R_\pfrak^n$ for almost every $\pfrak$, we may assume that $a_\pfrak=b_\pfrak=1$ for almost every $\pfrak$.  Because $R$ is a PID we may assume that both $a_\pfrak$ and $b_\pfrak$ are powers of $\pfrak$-adic uniformizing parameters in $R$.  Let $a=\Pi_\pfrak a_\pfrak, b=\Pi_\pfrak b_\pfrak\in K^\times$ and it follows that $aR^n_\pfrak\subseteq X_\pfrak\subseteq bR^n_\pfrak$. Using (\ref{IntegralLocalGlobal}) we have that $aR^n=\lbrace x\in K^n | x\in aR^n_\pfrak\text{ for all } \pfrak \rbrace$ and that $bR^n=\lbrace x\in K^n | x\in bR^n_\pfrak\text{ for all } \pfrak \rbrace$. Therefore $aR^n\subseteq X'\subseteq bR^n$ and we conclude $X'$ to be an $R$-lattice.

Lastly, we show that $X'_\pfrak=X_\pfrak$ for all $\pfrak\in M_R$.  The inclusion $X'_\pfrak\subseteq X_\pfrak$ follows immediately from the definitions: Proposition~\ref{LatticeProp} provides an element $A\in\GL_n(K)$ such that $X'=AR^n$, and $X'_\pfrak=AR_\pfrak^n$.  Since $X'\subseteq X_\pfrak$, the column vectors of $A$ are in $X_\pfrak$, whereby $X'_\pfrak=AR^n_\pfrak\subseteq X_\pfrak$.

To show equality $X'_\pfrak=X_\pfrak$ for all $\pfrak\in M_R$, suppose there exists some $\pfrak_0\in M_R$ with proper inclusion $X'_{\pfrak_0}\subsetneq X_{\pfrak_0}$; we will derive a contradiction.

Let $\AA^n_R$ be the affine adelic space over $R$.  This space is the restricted direct product of the affine spaces $K^n_\pfrak$ with respect to the subsets $R^n_\pfrak$.  Specifically,
\begin{equation*}
\AA^n_R = \bigg\{(a_\pfrak)\in\prod_{\pfrak\in M_R}K_\pfrak^n\bigg|  \,\,
		\begin{tabular}{@{}l@{}}
           $a_\pfrak\in R_\pfrak$ for almost all $\pfrak$ 
      \end{tabular}\bigg\}.
\end{equation*}
Thus an arbitrary element of $\AA^n_R$ is a tuple $(a_\pfrak)$, indexed by the primes $\pfrak\in M_R$, where each $a_\pfrak\in K^n_\pfrak$, and where $a_\pfrak\in R^n_\pfrak$ for almost all $\pfrak$.  The affine adelic space has a topology whose basis consists of sets of the form $\Pi_\pfrak U_\pfrak$, where each $U_\pfrak$ is an open subset of $K^n_\pfrak$ and where $U_\pfrak=R^n_\pfrak$ for almost all $\pfrak$.  Naturally, $K^n$ is a subset of $\AA^n_R$ by identifying $a\in K^n$ with the principal adele $(a_\pfrak)$, where $a_\pfrak=a$ for all $\pfrak$.

Define subsets of $\AA_R^n$ by $Y'=\Pi_\pfrak X'_\pfrak$ and $Y=\Pi_\pfrak X_\pfrak$.  Since we have already shown that $X'_\pfrak\subseteq X_\pfrak$ for all $\pfrak\in M_R$, and since we have assumed that $X'_{\pfrak_0}\subsetneq X_{\pfrak_0}$ for some $\pfrak_0\in M_R$, it follows that $Y'\subsetneq Y$.  Since an arbitrary $R_\pfrak$-lattice is both open and closed in $K_\pfrak^n$, it follows from the definition of the restricted direct product topology that $Y$ and $Y'$ are both open and closed in $\AA^n_R$, and therefore that $Y\setminus Y'$ is a nonempty open subset of $\AA^n_R$. It follows from a standard argument that $K^n$ is a dense subset of $\AA^n_R$. (When $R=\ZZ$, this is the most basic form of the weak approximation theorem, the proof of which can be found in Cassels (\cite{MR911121} Ch. II, $\S$14, 15); a direct generalization of this argument holds for an arbitrary PID.)  Therefore, there exists $x\in K^n$ whose principal adele $(x)$ is an element of $Y\setminus Y'$.  Since $(x)\in Y=\Pi_\pfrak X_\pfrak$, we have $x\in X_\pfrak$ for all $\pfrak$ and hence by definition, $x\in X'$.  It follows that $x\in X'_\pfrak$ for all $\pfrak$ and consequently $(x)\in \Pi_\pfrak X'_\pfrak=Y'$.  This contradiction implies that $X'_\pfrak=X_\pfrak$ for all  $\pfrak\in M_R$.
\end{proof}


\section{The adelic general linear group over a PID}\label{FactorizationSect}

The adelic general linear group $\GL_{n}(\AA_R)$ associated to $R$ is the restricted direct product of the groups $\GL_{n}(K_\pfrak)$ with respect to the subgroups $\GL_{n}(R_\pfrak)$.  More specifically,
\begin{equation*}
\GL_{n}(\AA_R) = \bigg\{(A_\pfrak)\in\prod_{\pfrak\in M_R}\GL_{n}(K_\pfrak)\bigg|  \,\,
		\begin{tabular}{@{}l@{}}
           $A_\pfrak\in \GL_{n}(R_\pfrak)$ for almost all $\pfrak$ 
      \end{tabular}\bigg\}.
\end{equation*}

The main result of this section shows that the group $\GL_{n}(\AA_R)$ factors into a product of two natural subgroups.  First, $\GL_{n}(K)$ embeds into $\GL_{n}(\AA_R)$ by the identification of each $A\in \GL_{n}(K)$ with the its {\em principal} adele $(A_\pfrak)$, defined by $A_\pfrak = A$ for all $\pfrak\in M_R$.  The second subgroup of $\GL_{n}(\AA_R)$ is 
\begin{equation*}
\GL_{n}^0(\AA_R) = \prod_{\pfrak\in M_R}\GL_{n}(R_\pfrak),
\end{equation*}
the direct product of the $R_\pfrak$-integral subgroups $\GL_{n}(R_\pfrak)$, over all primes $\pfrak\in M_R$.

\begin{prop}\label{MainGroupThm}
$\GL_{n}(\AA_R) = \GL_{n}^0(\AA_R)\GL_{n}(K)$.
\end{prop}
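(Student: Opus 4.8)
Given an adele $(A_\pfrak)\in\GL_n(\AA_R)$, I want to produce $B\in\GL_n^0(\AA_R)$ and $C\in\GL_n(K)$ with $A_\pfrak = B_\pfrak C$ for all $\pfrak$. The natural strategy is to pass to lattices: the adele $(A_\pfrak)$ gives, for each $\pfrak$, an $R_\pfrak$-lattice $X_\pfrak = A_\pfrak R_\pfrak^n$ in $K_\pfrak^n$, and since $A_\pfrak\in\GL_n(R_\pfrak)$ for almost all $\pfrak$, we have $X_\pfrak = R_\pfrak^n$ for almost all $\pfrak$. By the second lemma of Section~\ref{LatticeSect}, the set $X' = \{x\in K^n : x\in X_\pfrak \text{ for all }\pfrak\}$ is an $R$-lattice in $K^n$ with $X'_\pfrak = X_\pfrak$ for every $\pfrak$. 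By Proposition~\ref{LatticeProp}, $X' = CR^n$ for some $C\in\GL_n(K)$, and then $X'_\pfrak = CR_\pfrak^n$.

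**Key steps.** First, fix $(A_\pfrak)\in\GL_n(\AA_R)$ and define $X_\pfrak = A_\pfrak R_\pfrak^n$; verify via Proposition~\ref{LatticeProp} that each $X_\pfrak$ is an $R_\pfrak$-lattice and that $X_\pfrak = R_\pfrak^n$ for almost all $\pfrak$ (precisely those where $A_\pfrak\in\GL_n(R_\pfrak)$). Second, invoke the second lemma of Section~\ref{LatticeSect} to get the global lattice $X'$ with $X'_\pfrak = X_\pfrak$, and use Proposition~\ref{LatticeProp}(iii) to write $X' = CR^n$ with $C\in\GL_n(K)$. Third, observe that for each $\pfrak$ we then have $A_\pfrak R_\pfrak^n = X_\pfrak = X'_\pfrak = CR_\pfrak^n$, so $C^{-1}A_\pfrak$ is an $R_\pfrak$-module automorphism of $R_\pfrak^n$, hence $C^{-1}A_\pfrak\in\GL_n(R_\pfrak)$. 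Setting $B_\pfrak = A_\pfrak C^{-1}$... wait — one must be careful with the side: since we want $A_\pfrak = B_\pfrak C$, set $B_\pfrak = A_\pfrak C^{-1}$ and check $B_\pfrak\in\GL_n(R_\pfrak)$. This requires $A_\pfrak C^{-1}\in\GL_n(R_\pfrak)$, i.e. that right multiplication by $C^{-1}$ sends the basis of $X_\pfrak$ to a basis of $R_\pfrak^n$; this follows from $X_\pfrak = CR_\pfrak^n$ since $A_\pfrak R_\pfrak^n = CR_\pfrak^n$ forces $C^{-1}A_\pfrak R_\pfrak^n = R_\pfrak^n$, and then $A_\pfrak C^{-1} = C(C^{-1}A_\pfrak)C^{-1}$ — this is not obviously integral. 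The cleaner formulation: write instead $A_\pfrak = C B_\pfrak'$ where $B_\pfrak' = C^{-1}A_\pfrak\in\GL_n(R_\pfrak)$, giving $\GL_n(\AA_R) = \GL_n(K)\,\GL_n^0(\AA_R)$, and then take inverses/transposes, or simply note the two factorizations are equivalent by replacing $(A_\pfrak)$ with $(A_\pfrak^{-1})$ or by a symmetry argument on lattices (using row lattices instead of column lattices). Finally, confirm $(B_\pfrak)\in\GL_n^0(\AA_R)$, which is automatic since each $B_\pfrak\in\GL_n(R_\pfrak)$ by construction.

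**Main obstacle.** The real content is entirely packaged into the second lemma of Section~\ref{LatticeSect} (the correspondence between adelic families of local lattices and global lattices), whose proof used weak approximation. So the proposition itself is essentially a bookkeeping exercise once that lemma is in hand. The one genuine subtlety to get right is the \emph{side} of the decomposition: $\GL_n(R_\pfrak)$-cosets on the left versus the right correspond to column lattices versus row lattices, and one must make sure the lattice-theoretic input is applied on the correct side (or explicitly note the left-right symmetry, e.g. via transpose, to deduce the stated form $\GL_n^0(\AA_R)\GL_n(K)$ from the more naturally-obtained $\GL_n(K)\GL_n^0(\AA_R)$). I would handle this by proving the factorization with the principal-adele factor on the right as above, then remarking that applying this to the adele $((A_\pfrak^{-1})^{T})$ or simply taking inverses of both sides yields the stated ordering, since both $\GL_n(K)$ and $\GL_n^0(\AA_R)$ are subgroups closed under inversion.
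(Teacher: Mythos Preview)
Your proposal is correct and follows essentially the same lattice-theoretic route as the paper: the paper packages your first two steps (together with the stabilizer computation $C^{-1}A_\pfrak\in\GL_n(R_\pfrak)$) into a separate lemma phrased as a transitive $\GL_n(\AA_R)$-action on the set of $R$-lattices, and then the proposition is the standard orbit--stabilizer consequence. The one cosmetic difference is that the paper applies the construction to the adele $A^{-1}$ rather than $A$, so that the factorization comes out directly as $\GL_n^0(\AA_R)\GL_n(K)$ without the inversion patch you need at the end.
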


The following lemma contains most of work toward the proof of Proposition~\ref{MainGroupThm}.

\begin{lem}
Let $\Xcal_R$ denote the set of $R$-lattices in $K^n$.  There exists a transitive group action
\begin{equation*}
\begin{split}
\GL_{n}(\AA_R)\times\Xcal_R & \rightarrow\Xcal_R \\
(A,X) & \mapsto A\cdot X,
\end{split}
\end{equation*}
where $A\cdot$X is defined to be the $R$-lattice
\begin{equation*}
A\cdot X =\lbrace x\in K^n|x\in A_\pfrak X_\pfrak\text{ for all }\pfrak\rbrace.
\end{equation*}
Moreover, the stabilizer in $\GL_n(\AA_R)$ of the trivial lattice $R^n$ is $\GL_{n}^0(\AA_R)$.
\end{lem}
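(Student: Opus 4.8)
The plan is to verify in turn: (1) that the formula $A\cdot X = \{x \in K^n \mid x \in A_\pfrak X_\pfrak \text{ for all }\pfrak\}$ actually produces an $R$-lattice; (2) that this defines a group action; (3) that the action is transitive; and (4) the stabilizer computation. Steps (1) and (2) reduce essentially to the previous two lemmas and bookkeeping, step (3) is the substantive geometric content, and step (4) is a short unwinding of definitions.

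For step (1), I would observe that for each $\pfrak$, since $X_\pfrak$ is an $R_\pfrak$-lattice and $A_\pfrak \in \GL_n(K_\pfrak)$, the image $A_\pfrak X_\pfrak$ is again an $R_\pfrak$-lattice in $K_\pfrak^n$ (it is $A_\pfrak B_\pfrak R_\pfrak^n$ if $X_\pfrak = B_\pfrak R_\pfrak^n$). Moreover, for almost all $\pfrak$ we have both $A_\pfrak \in \GL_n(R_\pfrak)$ (since $A \in \GL_n(\AA_R)$) and $X_\pfrak = R_\pfrak^n$ (by the first of the preceding lemmas, applied to $X$), hence $A_\pfrak X_\pfrak = R_\pfrak^n$ for almost all $\pfrak$. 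Thus the collection $(A_\pfrak X_\pfrak)_\pfrak$ satisfies the hypotheses of the second preceding lemma, so $A\cdot X$ is an $R$-lattice and moreover $(A\cdot X)_\pfrak = A_\pfrak X_\pfrak$ for every $\pfrak$. This last identity is the key computational fact that makes everything else routine.

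For step (2), the identity adele obviously fixes every lattice. For associativity, given $A, B \in \GL_n(\AA_R)$, I would use the identity $(A\cdot X)_\pfrak = A_\pfrak X_\pfrak$ just established: then $(B\cdot(A\cdot X))_\pfrak = B_\pfrak (A\cdot X)_\pfrak = B_\pfrak A_\pfrak X_\pfrak = (BA)_\pfrak X_\pfrak = ((BA)\cdot X)_\pfrak$ for all $\pfrak$, and since an $R$-lattice is determined by its localizations (again by the second preceding lemma, or by $(\ref{IntegralLocalGlobal})$ applied coordinatewise), we get $B\cdot(A\cdot X) = (BA)\cdot X$. This also shows each $A$ acts invertibly, with inverse the action of $A^{-1}$. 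For transitivity, let $X$ be any $R$-lattice; choose $C \in \GL_n(K)$ with $X = CR^n$, so $X_\pfrak = CR_\pfrak^n$ for all $\pfrak$. The principal adele $(C)_\pfrak = C$ lies in $\GL_n(\AA_R)$ (it is trivially in $\GL_n(R_\pfrak)$ for almost all $\pfrak$, namely those not dividing numerators or denominators of entries of $C$ or $\det C$), and $(C \cdot R^n)_\pfrak = C R_\pfrak^n = X_\pfrak$ for all $\pfrak$, hence $C \cdot R^n = X$. So the action is transitive on $\Xcal_R$.

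Finally, for the stabilizer of $R^n$: $A \cdot R^n = R^n$ if and only if $(A\cdot R^n)_\pfrak = (R^n)_\pfrak$ for all $\pfrak$, i.e.\ $A_\pfrak R_\pfrak^n = R_\pfrak^n$ for all $\pfrak$, which holds if and only if $A_\pfrak \in \GL_n(R_\pfrak)$ for every $\pfrak$ — that is, precisely when $A \in \GL_n^0(\AA_R)$. The main obstacle, such as it is, is really just being careful in step (1) that the ``for almost all $\pfrak$'' conditions combine correctly and that the identity $(A\cdot X)_\pfrak = A_\pfrak X_\pfrak$ is genuinely established (not merely the inclusion $\subseteq$); once that identity is in hand, steps (2)–(4) are formal.
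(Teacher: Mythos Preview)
Your proposal is correct and follows essentially the same route as the paper: both arguments hinge on Lemma~5 to show that $A\cdot X$ is a lattice with $(A\cdot X)_\pfrak = A_\pfrak X_\pfrak$, then deduce the group-action axioms, transitivity via Proposition~\ref{LatticeProp}, and the stabilizer by localizing. The only place where the paper is slightly more careful is the identity axiom $I\cdot X = X$ (equivalently, that a lattice equals $\{x\in K^n\mid x\in X_\pfrak\text{ for all }\pfrak\}$), which you declare ``obvious'' but which the paper verifies explicitly from $(\ref{IntegralLocalGlobal})$ for $R^n$ and then transports by $A\in\GL_n(K)$; since you invoke exactly this principle later, the gap is cosmetic rather than substantive.
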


\begin{proof}
Let $A,B\in\GL_{n}(\AA_R)$ and $X\in\Xcal_R$.  The fact that $A\cdot X$ is an $R$-lattice in $K^n$ follows from Lemma 5.

Let $I=(I_\pfrak)$ denote the identity adele: $I_\pfrak$ is the identity matrix in $\GL_{n}(K_\pfrak)$ for each $\pfrak\in M_R$.  We show that $I\cdot X=X$, or equivalently, that 
\begin{equation*}
\lbrace x\in K^n | x\in X_\pfrak\text{ for all } \pfrak\rbrace = X.
\end{equation*}
First, if $X=R^n$ then the desired identity 
\begin{equation*}
\lbrace x\in K^n| x\in R^n_\pfrak\text{ for all }\pfrak\rbrace = R^n
\end{equation*}
follows immediately from $(\ref{IntegralLocalGlobal})$, and thus $I\cdot R^n=R^n$.  Now let $X$ be arbitrary.  By Proposition 3, $X=AR^n$ for some $A\in\GL_{n}(K)$, and by definition $X_\pfrak=AR^n_\pfrak$.  It follows that
\begin{equation*}
\begin{split}
I\cdot X & = \lbrace x\in K^n|x\in X_\pfrak=AR^n_\pfrak\text{ for all }\pfrak\rbrace \\
	& = \lbrace Ax | x\in K^n, x\in R^n_\pfrak\text{ for all } \pfrak\rbrace \\
	& = AR^n=X.
\end{split}
\end{equation*}

The equality $A\cdot(B\cdot X)=(AB)\cdot X$ follows from the identity $(B\cdot X)_\pfrak=B_\pfrak X_\pfrak$, which itself is a trivial consequence of Lemma 5.  Specifically, 
\begin{equation*}\begin{array}{lcl}
A\cdot(B\cdot X)&=&\lbrace x\in K^n|x\in A_\pfrak(B\cdot X)_\pfrak\text{ for all }\pfrak\rbrace\\
&=&\lbrace x\in K^n|x\in A_\pfrak(B_\pfrak X_\pfrak)\text{ for all }\pfrak\rbrace\\
&=&\lbrace x\in K^n|x\in (AB)_\pfrak X_\pfrak\text{ for all }\pfrak\rbrace\\
&=&(AB)\cdot X.
\end{array}\end{equation*}

The transitivity of the action follows from Proposition 3: for any lattice $X$ there is $A\in\GL_{n}(K)$ such that $X=AR^n$ and considering $A$ as a principal adele it then follows that $X=A\cdot R^n$. Therefore every $R$-lattice in $K^n$ is in the $\GL_{n}(\AA_R)$-orbit of the trivial lattice.

Finally, we must show that the stabilizer in $\GL_n(\AA_R)$ of the trivial lattice $R^n$ is $\GL_{n}^0(\AA_R)$; in other words, that
\begin{equation*}
\lbrace A\in \GL_{n}(\AA_R)| A\cdot R^n=R^n\rbrace = \GL_{n}^0(\AA_R).
\end{equation*}
If $A=(A_\pfrak)\in \GL_{n}^0(\AA_R)$, then $A_\pfrak\in\GL_n(R_\pfrak)$ for all $\pfrak\in M_R$, which implies that $A_\pfrak R^n_\pfrak=R^n_\pfrak$.  We conclude using $(\ref{IntegralLocalGlobal})$ that 
\begin{equation*}
\begin{split}
A\cdot R^n & = \lbrace x\in K^n|x\in A_\pfrak R^n_\pfrak \text{ for all }\pfrak\rbrace \\
	& = \lbrace x\in K^n|x\in R^n_\pfrak \text{ for all }\pfrak\rbrace \\
	& = R^n.
\end{split}
\end{equation*}

Conversely, suppose $A=(A_\pfrak)\in \GL_{n}(\AA_R)$ such that $A\cdot R^n=R^n$, which, by definition means that
\begin{equation}\label{LeftAndRight}
\lbrace x\in K^n | x\in A_\pfrak R^n_\pfrak\text{ for all }\pfrak\rbrace = R^n.
\end{equation}
Let $X$ and $Y$ denote the left-hand side and right-hand side of $(\ref{LeftAndRight})$, respectively, and fix $\pfrak\in M_R$.  Then trivially $Y_\pfrak=R_\pfrak^n$, and Lemma 5 shows that $X_\pfrak=A_\pfrak R^n_\pfrak$.  We conclude that $A_\pfrak R^n_\pfrak=R^n_\pfrak$, and this implies that $A\in\GL_n(R_\pfrak)$.  [Proof: Let $\{e_i\}\in R^n_\pfrak$ be the standard basis.  Then $A_\pfrak e_i\in R^n_\pfrak$ is the $i^{th}$ column of $M$, showing that $A_\pfrak$ has coefficients in $R_\pfrak$. Similarly, $A_\pfrak^{-1}$ fixes $R_\pfrak^n$ and therefore $A_\pfrak^{-1}$ has coefficients in $R_\pfrak$].  Hence $A_\pfrak\in\GL_{n}(R_\pfrak)$ for every prime $\pfrak$, and so by definition $A\in\GL_{n}^0(\AA_r)$.
\end{proof}

\begin{proof}[Proof of Proposition~\ref{MainGroupThm}]
Let $A\in \GL_{n}(\AA_R)$ be an arbitrary adele. Let $X=A^{-1}\cdot R^n$ be the lattice obtained by letting $A^{-1}$ act on the trivial lattice. By Proposition~\ref{LatticeProp}, $X=BR^n$ for $B\in\GL_{n}(K)$. Both $A^{-1}$ and $B$ take $R^n$ bijectively onto $X$, so $AB$ fixes $R^n$ and therefore lies in the stabilizer $\GL_{n}^0(\AA_R)$, say $AB=C$ for $C\in \GL_{n}^0(\AA_R)$.  Therefore $A=CB^{-1}\in\GL_{n}^0(\AA_R)\GL_{n}(K)$.
\end{proof}


\section{The existence of global minimal models}

In this section we prove the main results of the paper, Theorem~\ref{GMMTheoremIntro} and Corollary~\ref{EGRTheoremIntro}.  First, however, we give a proposition summarizing the relevant properties of the resultant associated to a homogeneous map $\Phi:\AA^{N+1}\to\AA^{N+1}$.  

\begin{prop}\label{ResProp}
Let $\Phi:\AA^{N+1}\to \AA^{N+1}$ be a map defined over a field $K$ by an $(N+1)$-tuple $\Phi=(\Phi_0,\dots,\Phi_N)$ of forms of some common degree $d\geq1$ in the variables $x_0,x_1,\dots,x_N$, and let $\Res(\Phi)$ denote the resultant of $\Phi$.
\begin{itemize}
	\item[(i)]  $\Res(\Phi)=0$ if and only if $\Phi(\a)=\0$ for some $\a\in\AA^{N+1}(\Kbar)\setminus\0$.
	\item[(ii)]  If $A\in\GL_{N+1}(K)$ is a linear automorphism of $\AA^{N+1}$ defined over $K$, then $\Res(A\circ\Phi\circ A^{-1})=\det(A)^{C(N,d)}\Res(\Phi)$ for some integer $C(N,d)$ depending only on $N$ and $d$.
\end{itemize}
\end{prop}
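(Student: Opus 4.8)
\emph{Proof proposal.} The plan is to treat Proposition~\ref{ResProp} as a summary of standard elimination theory: I would cite the literature for the foundational construction of the resultant, and record only the short argument that reduces the conjugation law in~(ii) to two elementary transformation laws. For part~(i) I would appeal to the classical theory of the multivariate resultant $\Res(\Phi_0,\dots,\Phi_N)$ of $N+1$ forms of common degree $d$ in $N+1$ variables; from that theory one has that $\Res$ is an irreducible polynomial with $\ZZ$-coefficients in the coefficients of the $\Phi_n$, homogeneous of degree $d^N$ in the coefficients of each single form $\Phi_n$ (hence of total degree $(N+1)d^N$), and --- this is precisely statement~(i) --- characterized over an arbitrary field by the property that $\Res(\Phi)=0$ if and only if the $\Phi_n$ have a common zero $\a\in\AA^{N+1}(\Kbar)\setminus\0$. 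Convenient references are van der Waerden's \emph{Modern Algebra}, Jouanolou's \emph{Le formalisme du r\'esultant}, the monograph of Gelfand, Kapranov and Zelevinsky, the book \emph{Using Algebraic Geometry} of Cox, Little and O'Shea, or, in precisely the present dynamical context, Silverman's book \cite{MR2316407}. Were one to reprove it rather than cite, the route is via the projective Nullstellensatz over $\Kbar$ --- the common zero set in $\PP^N(\Kbar)$ is empty iff the ideal $(\Phi_0,\dots,\Phi_N)$ contains a power of the irrelevant ideal iff a Macaulay matrix of multiplications of the $\Phi_n$ in a sufficiently high degree has maximal rank --- with $\Res(\Phi)$ playing the role of the invariant detecting maximal rank; but this is a self-contained chapter of commutative algebra that I would cite rather than reproduce.

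For part~(ii) I would extract from the same formalism two transformation laws. First, if $M\in\GL_{N+1}(K)$ and $M\cdot\Phi$ denotes the tuple whose $n$-th form is $\sum_m M_{nm}\Phi_m$, then $\Res(M\cdot\Phi)=\det(M)^{d^N}\Res(\Phi)$. Second, if $B\in\GL_{N+1}(K)$ and $\Phi\circ B$ denotes the tuple with $n$-th form $\Phi_n(B\x)$, then $\Res(\Phi\circ B)=\det(B)^{d^{N+1}}\Res(\Phi)$. Granting these, part~(ii) follows at once: writing $\Psi=A\circ\Phi\circ A^{-1}=A\cdot(\Phi\circ A^{-1})$ and applying the second law with $B=A^{-1}$ followed by the first with $M=A$ gives
\begin{equation*}
\Res(\Psi)=\det(A)^{d^N}\Res(\Phi\circ A^{-1})=\det(A)^{d^N}\det(A)^{-d^{N+1}}\Res(\Phi)=\det(A)^{\,d^N-d^{N+1}}\Res(\Phi),
\end{equation*}
so the statement holds with $C(N,d)=d^N-d^{N+1}$; for the uses made of it in this paper, only the \emph{existence} of such an exponent, depending on $N$ and $d$ alone, matters.

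Each transformation law is itself standard, and I would either cite it or argue as follows. Passing to a polynomial ring in the coefficients of $\Phi$ and the matrix entries, the transformed resultant vanishes whenever $\Res(\Phi)=0$, since an invertible linear change --- of the target coordinates in the first case, of the source coordinates in the second --- does not affect whether the forms have a common nontrivial zero; because $\Res(\Phi)$ is irreducible and primitive, it divides the transformed resultant, and a comparison of total degrees in the coefficients of $\Phi$ (which is $(N+1)d^N$ on both sides) shows that the quotient $g$ depends only on the matrix entries. The construction is multiplicative in the matrix, so $g$ is a polynomial character of $\GL_{N+1}$ and hence a power of $\det$; the exponent is fixed by specializing the matrix to a scalar: scaling a single form $\Phi_n$ by $\mu$ multiplies $\Res$ by $\mu^{d^N}$, which gives the exponent $d^N$ in the first law, while scaling all $N+1$ variables by $\mu$ multiplies every form by $\mu^d$ and hence $\Res$ by $\mu^{(N+1)d^{N+1}}$, against $\det(\mu I)=\mu^{N+1}$, giving the exponent $d^{N+1}$ in the second. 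The main --- indeed the only --- obstacle is that part~(i) and the divisibility input just used rest on the full construction of the multivariate resultant, which lies outside the scope of this paper; accordingly the honest write-up cites that construction precisely and retains only the short passage from the conjugation law to the two transformation laws.
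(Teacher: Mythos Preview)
Your proposal is correct and, in spirit, matches the paper: both treat Proposition~\ref{ResProp} as a summary of classical elimination theory to be cited rather than reproved. The paper is in fact even more terse than you are --- it simply refers to van der Waerden \cite{vanderWaerden}, \S82 for part~(i) and to Cheng--McKay--Wang \cite{MR1227515}, Cor.~5 for part~(ii), without writing down any argument.

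The difference is that for part~(ii) you do not merely cite but actually derive the conjugation law from the two more primitive transformation rules $\Res(M\cdot\Phi)=\det(M)^{d^N}\Res(\Phi)$ and $\Res(\Phi\circ B)=\det(B)^{d^{N+1}}\Res(\Phi)$, obtaining the explicit exponent $C(N,d)=d^N-d^{N+1}=d^N(1-d)$. This is a modest but genuine addition: it makes the dependence on $N$ and $d$ concrete, and it rests on identities that are arguably more widely quotable than the chain-rule corollary in \cite{MR1227515}. On the other hand, for the purposes of this paper only the \emph{existence} of such an integer exponent is used (so that $\ord_\pfrak(\Res(\Psi))=\ord_\pfrak(\Res(\Phi_\pfrak))$ when the conjugating matrix has unit determinant), and the paper's bare citation suffices for that. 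Either write-up is fine; yours is more informative, the paper's is shorter.
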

\begin{proof}
Part (i) is standard, see \cite{vanderWaerden}, $\S$82.  Part (ii) follows from \cite{MR1227515}, Cor. 5.
\end{proof}

\begin{proof}[Proof of Theorem~\ref{GMMTheoremIntro}]
Let $\Phi:\AA^{N+1}\to \AA^{N+1}$ be an arbitrary homogeneous lift for $\phi$.  For each $\pfrak\in M_R$, let $\Phi_\pfrak:\AA^{N+1}\to \AA^{N+1}$ be a minimal $\pfrak$-integral model for $\phi$; thus $\Phi_\pfrak= A_\pfrak\circ\Phi\circ A_\pfrak^{-1}$ for some $ A_\pfrak\in\GL_{N+1}(K)$.  If $S$ denotes the finite set of $\pfrak\in M_R$ for which some coefficient of $\Phi$ is not $R_\pfrak$-integral, or for which $\Res(\Phi)$ is not an $R_\pfrak$-unit, then we may take $\Phi_\pfrak=\Phi$ and $ A_\pfrak=I$ for all $\pfrak\not\in S$.

By Proposition~\ref{MainGroupThm}, there exists $ A\in\GL_{N+1}(K)$ such that $ A_\pfrak A^{-1}\in\GL_{N+1}(R_\pfrak)$ for each $\pfrak\in M_R$.  Consider the model $\Psi:\AA^{N+1}\to \AA^{N+1}$ for $\phi$ defined by $\Psi= A\circ\Phi\circ A^{-1}$.  For each $\pfrak\in M_R$, we have 
\begin{equation}\label{PsiConj}
\Psi=( A A_\pfrak^{-1})\circ\Phi_\pfrak\circ( A A_\pfrak^{-1})^{-1}.
\end{equation}
Since $A A_\pfrak^{-1}=(A_\pfrak A^{-1})^{-1}\in\GL_{N+1}(R_\pfrak)$ and $\Phi_\pfrak$ has coefficients in $R_\pfrak$, it follows from $(\ref{PsiConj})$ that $\Psi$ has coefficients in $R_\pfrak$ as well; since this holds for arbitrary $\pfrak\in M_R$, it follows from $(\ref{IntegralLocalGlobal})$ that $\Psi$ has coefficients in $R$.  Finally, since $\ord_\pfrak(\det( A A_\pfrak^{-1}))=0$, it follows from $(\ref{PsiConj})$ and Proposition~\ref{ResProp} that 
$$\ord_\pfrak(\Res(\Psi))=\ord_\pfrak(\Res(\Phi_\pfrak)),$$ 
and so $\Psi$ is $\pfrak$-minimal for each $\pfrak\in M_R$.  
\end{proof}

\begin{proof}[Proof of Corollary~\ref{EGRTheoremIntro}]
Since $\phi$ has everywhere good reduction, the model $\Psi$ constructed in Theorem~\ref{GMMTheoremIntro} satisfies $\ord_\pfrak(\Res(\Psi))=0$ for all nonzero prime ideals $\pfrak$ of $R$, and therefore $(\ref{IntegralLocalGlobal})$ implies that $\Res(\Psi)\in R^\times$.
\end{proof}

\end{document}